\newcommand{\bbC}{\mathbb{C}}
\newcommand{\bbR}{\mathbb{R}}
\newcommand{\calH}{\mathcal{H}}
\newcommand{\calL}{\mathcal{L}}
\newcommand{\calU}{\mathcal{U}}
\newcommand{\calV}{\mathcal{V}}
\newcommand{\calb}{\mathcal b}
\newcommand{\fraka}{\mathfrak a}
\newcommand{\frakb}{\mathfrak b}
\newcommand{\frakc}{\mathfrak c}
\newcommand{\frakd}{\mathfrak d}
\newcommand{\frakM}{\mathfrak M}
\DeclareMathOperator{\re}{Re} 
\newcommand{\argument}{\mathord{\,\cdot\,}} 
\DeclareMathOperator{\sgn}{sgn} 
\newcommand{\norm}[1]{\left\lVert #1 \right\rVert} 
\newcommand{\modulus}[1]{\left\lvert #1 \right\rvert} 
\newcommand{\duality}[2]{\left\langle#1\, ,\, #2\right\rangle} 
\newcommand{\dom}[1]{\operatorname{dom}\left(#1\right)} 
\newcommand{\Proj}{\operatorname{Proj}} 
\theoremstyle{definition}
\newtheorem{definition}{Definition}[section]
\newtheorem*{remark*}{Remark}
\newtheorem*{remarks*}{Remarks}
\newtheorem{example}[definition]{Example}
\theoremstyle{plain}
\newtheorem{proposition}[definition]{Proposition}
\newtheorem{theorem}[definition]{Theorem}
\numberwithin{equation}{section} 
\begin{document}

\title[Domination of semigroups on non-commutative spaces]{Domination of semigroups on standard forms of von Neumann algebras}
\author{Sahiba Arora}
\address{Sahiba Arora, Department of Applied Mathematics, University of Twente, 217, 7500 AE, Enschede, The Netherlands}
\email{s.arora-1@utwente.nl}
\author{Ralph Chill}
\address{Ralph Chill, Technische Universität Dresden, Institut für Analysis, Fakultät für Mathematik , 01062 Dresden, Germany}
\email{ralph.chill@tu-dresden.de}
\author{Sachi Srivastava}
\address{Sachi Srivastava, Department of Mathematics, University of Delhi, South Campus, New Delhi-21, India}
\email{ssrivastava@maths.du.ac.in}
\thanks{This project is supported by the VAJRA scheme VJR/2018/000127, of the Science and Engineering Research Board, Department of Science and Technology,  Govt. of India. The second and third authors gratefully acknowledge the support from VAJRA}

\subjclass[2010]{47D06, 46L89, 47A07, 47B65, 47A15}
\keywords{domination of semigroups; quadratic forms; standard forms of von Neumann algebras; noncommutative theory}
\date{\today}
\begin{abstract}
    Consider $(T_t)_{t\ge 0}$ and $(S_t)_{t\ge 0}$ as real $C_0$-semigroups generated by closed and symmetric sesquilinear forms on a standard form of a von Neumann algebra. We provide a characterisation for the domination of the semigroup $(T_t)_{t\ge 0}$ by $(S_t)_{t\ge 0}$, which means that $-S_t v\le T_t u\le S_t v$ holds for all $t\ge 0$ and all real $u$ and $v$ that satisfy $-v\le u\le v$. This characterisation extends the Ouhabaz characterisation for semigroup domination to the non-commutative  $L^2$-spaces. Additionally, we present a simpler characterisation when both semigroups are positive as well as consider the setting in which $(T_t)_{t\ge 0}$ need not be real.
\end{abstract}

\maketitle

\section{Introduction}

Domination of $C_0$-semigroups was first investigated by Simon \cite{Si77} inspired by the concept of domination of operators that occurred in the form of Kato's inequality \cite{Ka72}. Since then, the notion of domination has garnered considerable attention in the theory of $C_0$-semigroups. 

Whereas characterisations in terms of the generators were known much earlier (see, for instance,  \cite{HeScUh77, Si79}, and \cite[Section~C-II-4]{Na86}), a characterisation in terms of the generating forms was given by Ouhabaz in \cite{Ou96} and  by Barth\'elemy in \cite{By96} and was generalised to semigroups  acting on two different $L^2$-spaces in \cite{MaVoVo05}. These results were recently considered within an abstract framework on more general ordered Hilbert spaces in \cite{LeScWi20}.

In this article, we generalise Ouhabaz's result to the non-commutative setting. Following Cipriani \cite{Ci97, Ci08} -- who generalised the Beurling-Deny criterion to the non-commutative setting (see \cite[Theorem~2.53]{Ci08} or \cite[Proposition~4.5 and Theorem~4.7]{Ci97}) -- we consider our state space to be a standard form of a von Neumann algebra. This has the advantage that for real elements, there is a concept of positive and negative parts (recalled below). 

It turns out that when generalising the known domination results from the commutative setting to the non-commutative setting one has to avoid the modulus as much as possible, both in the definition of domination and in the proofs. This is due to the well-known fact that in the non-commutative setting, the modulus no longer satisfies the triangle inequality so some convex structure is lost. Even though we define the modulus below, we only make use of it in the commutative setting. 

In the remainder of this section, we fix our notations about sesquilinear forms,
briefly recall the notion of standard forms, and introduce the concept of generalised ideals. Subsequently, in Section~\ref{sec:domination-general}, we state and prove our main result about the domination of real semigroups. Then, in Section~\ref{sec:domination-both-positive}, we restrict ourselves to the particular case when both semigroups are positive. Finally, in Section~\ref{sec:domination-not-real}, we drop the assumption for the dominated semigroup to be real.

\subsection*{Sesquilinear forms on Hilbert spaces and their associated semigroups} 

Throughout the article, we consider $C_0$-semigroups on complex Hilbert spaces, the generators of which arise from sesquilinear forms. Let $\calH$ be a complex Hilbert space, let $\dom{\fraka}\subseteq\calH$ be a subspace and let $\fraka : \dom{\fraka} \times \dom{\fraka} \to \bbC$ be a sesquilinear form. The subspace $\dom{\fraka}$ is called the \emph{domain} of the form $\fraka$. We say that $\fraka$ is 
\begin{itemize}
    \item \emph{densely defined} if $\dom{\fraka}$ is a dense subspace of $\calH$,
    \item \emph{symmetric} if $\fraka (u,v) = \overline{\fraka (v,u)}$ for every $u$, $v\in\dom{\fraka}$,
    \item \emph{accretive} if $\re\fraka (u,u) \geq 0$ for every $u\in\dom{\fraka}$, and
    \item \emph{closed} if there exists $\omega\in\bbR$ such that $\| u\|_{\dom{\fraka}}^2 := \re\fraka (u,u) + \omega \| u\|_{\calH}^2$ defines a complete norm on $\dom{\fraka}$.
\end{itemize}
We recall that every closed and symmetric form is continuous.
In particular, if the sesquilinear form $\fraka$ is densely defined, closed, and symmetric, then the operator $A$ on $\calH$ given by 
\begin{align*}
    \dom{A} & := \{ u\in \dom{\fraka} : \exists f\in\calH\ \forall y\in\dom{\fraka} : \fraka (u,v) = \duality{f}{v} \} , \\
    Au & := f ,
\end{align*}
is self-adjoint, bounded from below, and therefore the negative generator of an analytic $C_0$-semigroup of self-adjoint operators. In short, we simply say that the sesquilinear form generates the semigroup. If the form $\fraka$ is, in addition, accretive, then the semigroup is contractive. As we are interested in the domination of semigroups, and since domination is invariant under simultaneous scaling of both semigroups, we restrict ourselves to accretive forms.

Throughout, we deliberately identify symmetric sesquilinear forms with their quadratic counterparts $\fraka : \dom{\fraka} \to \bbR$, $\fraka (u) := \fraka (u,u)$, since one can reconstruct the symmetric sesquilinear form from the quadratic form via the polarisation identity. As is customary in the literature, we denote the sesquilinear form and the quadratic form by the same letter. We further identify the quadratic form (and in turn the sesquilinear form) with its extension $\fraka : \calH \to \bbR\cup \{ +\infty\}$, where
\[
\fraka (u) := \begin{cases}
    \fraka (u) & \text{if } u\in\dom{\fraka} , \\
    +\infty & \text{if } u\in\calH \setminus \dom{\fraka} .
\end{cases}
\]
This extended quadratic form renders some statements easier to formulate. Note that if the sesquilinear form is symmetric and accretive, then the quadratic form (extended quadratic form) takes values in $[0,\infty )$ (respectively, $[0,\infty ]$).

Frequently, we require the following theorem about the invariance of closed, convex sets in Hilbert spaces. The equivalence of (i)-(iv) is well-known (see, for example, \cite[Theorems~2.2 and~2.3]{Ou04}) and the equivalences of (i), (v), and (vi) follow from \cite[Th\'eor\`eme~1.9]{By96}.

\begin{theorem}
    \label{thm:barthelemy}
Let $(R_t)_{t\geq 0}$ be a $C_0$-semigroup on a Hilbert space $\calH$ which is generated by a densely defined, closed, symmetric, and accretive sesquilinear form $\frakc : \dom{\frakc} \times\dom{\frakc} \to\bbC$. 
Let $C \subseteq\calH$ be a closed, convex subset and let $P$ be the orthogonal projection onto $C$. The following are equivalent:
\begin{enumerate}[\upshape (i)]
    \item $R_tC\subseteq C$ for every $t\geq 0$.
    \item $\frakc (Pu) \leq \frakc (u)$ for every $u\in\calH$.
    \item $P(\dom{\frakc})\subseteq \dom{\frakc}$ and  $\re\frakc( u, u-Pu )\ge 0$ for all $u\in\dom{\frakc}$.
    \item $P(\dom{\frakc})\subseteq \dom{\frakc}$ and  $\re\frakc( Pu, u-Pu )\ge 0$ for all $u\in\dom{\frakc}$.
\end{enumerate}
If $C_0\subseteq\calH$ is another closed, convex subset such that $C\subseteq C_0$ and $R_tC_0\subseteq C_0$ for every $t\geq 0$, then~{\upshape (i)}-{\upshape (iv)} are also equivalent to:
\begin{enumerate}[resume]
    \item $\frakc (Pu) \leq \frakc (u)$ for every $u\in C_0$.
    \item $P(\dom{\frakc} \cap C_0)\subseteq \dom{\frakc}$ and  $\re\frakc( u, u-Pu )\ge 0$ for all $u\in\dom{\frakc}\cap C_0$.
\end{enumerate}
\end{theorem}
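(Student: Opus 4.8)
The plan is to prove the core equivalence (i)$\Leftrightarrow$(ii) directly and then to deduce the remaining equivalences by elementary manipulations of the form together with the variational inequality for the projection $P$. Throughout I write $A$ for the self-adjoint operator associated with $\frakc$, so that $R_t=e^{-tA}$ with $0\le R_t\le\one$, and I use two standard reformulations of (i): via the exponential formula $R_tu=\lim_n(\one+\tfrac tn A)^{-n}u$ one sees that invariance of a closed convex set under all resolvents $\lambda(\lambda+A)^{-1}$ ($\lambda>0$) implies (i); conversely, writing $\lambda(\lambda+A)^{-1}=\int_0^\infty\lambda e^{-\lambda t}R_t\dx t$ exhibits the resolvent as a probability average of the $R_t$, so that (i) implies resolvent invariance. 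I will use whichever direction is convenient.

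For (i)$\Rightarrow$(ii) I would use the monotone spectral formula $\frakc(u)=\sup_{t>0}\tfrac1t\duality{(\one-R_t)u}{u}$ (valid with value $+\infty$ off $\dom\frakc$, since $\tfrac{1-e^{-t\mu}}{t}$ increases to $\mu$ as $t\downarrow0$). It then suffices to establish the pointwise inequality $\duality{(\one-R_t)Pu}{Pu}\le\duality{(\one-R_t)u}{u}$ for each $t>0$ and each $u\in\calH$; dividing by $t$ and taking the supremum yields (ii). Expanding $u=Pu+(u-Pu)$ and using $\one-R_t\ge0$, this reduces to $\re\duality{(\one-R_t)Pu}{u-Pu}\ge0$, which is precisely the projection variational inequality $\re\duality{u-Pu}{c-Pu}\le0$ applied with $c=R_tPu$, a point that lies in $C$ exactly because $Pu\in C$ and (i) holds. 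For the converse (ii)$\Rightarrow$(i) I would identify, for $g\in C$ and $\lambda>0$, the resolvent $w:=\lambda(\lambda+A)^{-1}g$ as the unique minimiser of the strictly convex functional $\Phi(v):=\tfrac12\frakc(v)+\tfrac\lambda2\norm{v-g}^2$. Since $g\in C$ and $P$ is the nearest-point map, $\norm{Pw-g}\le\norm{w-g}$, while (ii) gives $\frakc(Pw)\le\frakc(w)$; hence $\Phi(Pw)\le\Phi(w)$, and uniqueness of the minimiser forces $Pw=w$, i.e. $w\in C$. Thus every resolvent leaves $C$ invariant, and (i) follows.

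The equivalences (ii)$\Leftrightarrow$(iv)$\Leftrightarrow$(iii) are then purely formal. The symmetry of $\frakc$ gives the identity $\re\frakc(u,u-Pu)=\re\frakc(Pu,u-Pu)+\frakc(u-Pu)$, whose last term is nonnegative; this immediately yields (iv)$\Rightarrow$(iii), and together with $\frakc(u)-\frakc(Pu)=2\re\frakc(Pu,u-Pu)+\frakc(u-Pu)$ it yields (iv)$\Rightarrow$(ii) (the inclusion $P\dom\frakc\subseteq\dom\frakc$ being part of each statement, and for (ii) following from $\frakc(Pu)\le\frakc(u)<\infty$). For the reverse directions I would exploit the elementary but crucial observation that, for $s\ge0$, the point $u_s:=Pu+s(u-Pu)$ satisfies $Pu_s=Pu$: this is immediate from the variational inequality, since $\re\duality{u_s-Pu}{c-Pu}=s\,\re\duality{u-Pu}{c-Pu}\le0$. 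Inserting $u_s$ into the inequality of (ii) (respectively (iii)), using $u_s-Pu_s=s(u-Pu)$, dividing by $s$ and letting $s\downarrow0$ isolates the cross term and gives $\re\frakc(Pu,u-Pu)\ge0$, that is, (iv). This closes the cycle.

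Finally, for the statements involving $C_0$ the implications (ii)$\Rightarrow$(v) and (iii)$\Rightarrow$(vi) are mere restrictions. For (v)$\Rightarrow$(i) I would rerun the minimisation argument, noting that $R_tC_0\subseteq C_0$ transfers (by the averaging formula) to $\lambda(\lambda+A)^{-1}C_0\subseteq C_0$, so that for $g\in C\subseteq C_0$ the minimiser $w$ again lies in $C_0$; hence (v) applies to $w$ and forces $w\in C$. For (vi)$\Rightarrow$(v) the segment trick still works, because the whole segment $u_s$ ($s\in[0,1]$) from $Pu\in C\subseteq C_0$ to $u\in C_0$ stays in the convex set $C_0$, so (vi) may be applied to each $u_s$. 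I expect the genuine obstacle to be the core equivalence (i)$\Leftrightarrow$(ii): bridging the dynamic condition on the semigroup and the static condition on the form requires both the spectral monotonicity formula and the variational description of the resolvent, whereas all remaining steps are elementary consequences of the projection inequality and the quadratic structure of $\frakc$.
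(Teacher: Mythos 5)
Your proposal is correct, but note that the paper does not actually prove Theorem~\ref{thm:barthelemy} at all: it quotes the equivalence of (i)--(iv) from \cite[Theorems~2.2 and~2.3]{Ou04} and the equivalence with (v), (vi) from \cite[Th\'eor\`eme~1.9]{By96}. So the comparison is with the cited literature rather than with an in-paper argument. Measured against that, your proof is a clean, self-contained reconstruction built from the same two engines as the standard proofs: the monotone approximating forms $\frakc^t(u)=\tfrac1t\duality{(\one-R_t)u}{u}$, whose supremum as $t\downarrow 0$ recovers the extended quadratic form (this is exactly the tool \cite[Lemma~1.56]{Ou04} that the paper itself invokes later, in the proof of Theorem~\ref{thm:both-positive}), and the variational identification of the resolvent $\lambda(\lambda+A)^{-1}g$ as the unique minimiser of $v\mapsto\tfrac12\frakc(v)+\tfrac\lambda2\norm{v-g}^2$. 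I checked the individual steps and they all close: the reduction of (i)$\Rightarrow$(ii) to the projection inequality with the test point $c=R_tPu\in C$; the nonexpansiveness $\norm{Pw-g}=\norm{Pw-Pg}\le\norm{w-g}$ in the minimisation argument; the segment trick $Pu_s=Pu$ for $u_s=Pu+s(u-Pu)$, $s\ge 0$, which is the standard device for passing from the global inequality (ii) to the localised conditions (iii), (iv); and the relative statements (v), (vi), where you correctly use the averaging formula to transfer $R_tC_0\subseteq C_0$ to the resolvents and the convexity of $C_0$ to keep the segment inside $C_0$. Two points deserve to be made explicit if this were written out in full: first, your spectral formula $\frakc(u)=\sup_{t>0}\tfrac1t\duality{(\one-R_t)u}{u}$ (with value $+\infty$ off $\dom{\frakc}$) rests on the second representation theorem, $\dom{\frakc}=\dom{A^{1/2}}$ and $\frakc(u)=\norm{A^{1/2}u}^2$, which is where closedness and symmetry of $\frakc$ enter; this is harmless here since the theorem assumes symmetry, but it marks the place where your argument, unlike \cite[Theorem~2.2]{Ou04}, does not extend to non-symmetric forms. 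Second, the passage from resolvent invariance to (i) and back uses that Bochner integrals against probability measures, and strong limits, of $C$-valued families stay in the closed convex set $C$; this is standard but is doing real work in both directions.
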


\subsection*{Standard forms of von Neumann algebras}

Let $\calH$ be a complex Hilbert space, and let $\calH_+\subseteq\calH$ be a positive cone, that is, for all $u,v\in\calH_+$ and all $\lambda\geq 0$, we have $u+\lambda v\in\calH_+$ and $\calH_+ \cap (-\calH_+) = \{ 0\}$. Elements of $\calH_+$ are called the \emph{positive} elements of $\calH$. 
Throughout, the cone $\calH_+$ is \emph{self-polar}, that is, 
\[
    \calH_+ = \{ u\in \calH : \duality{u}{v} \ge 0 \text{ for all }v\in \calH_+\} .
\]
Self-polarity of the cone ensures the following decomposition \cite[Proposition~2.3]{Ci08}: $\calH$ is the complexification of the real Hilbert space
\[
    \calH^J := \{ u \in \calH: \duality{u}{v} \in \bbR \text{ for all }v\in \calH_+\} , 
\] 
that is, $\calH = \calH^J + i \calH^J.$ Elements of $\calH^J$ are called \emph{real}. 
For a subspace $\calU$ of the Hilbert space $\calH$, we use the notation $\calU^J:= \calU \cap \calH^J$ for simplicity.
A self-polar cone $\calH_+$ turns $\calH^J$ into a real ordered space with order denoted by $\leq$, where for $u,v \in \calH^J, $ we have $ u \le v  $ if $ 
 v - u  \in \calH_+$.  It also gives rise to an anti-unitary involution
\[
    J: \calH \to \calH, \qquad (u+iv)\mapsto (u-iv) \text{ for } u,v \in \calH^J.
\]

Let $u_+ :=\Proj(u,\calH_+)$  (respectively, $u_- := \Proj (-u,\calH_+)$) denote the orthogonal projection of a vector $u\in\calH^J$ (respectively, $-u$) onto the closed convex set $\calH_+$.  For every $u\in \calH^J$, we have $u_+$, $u_-\in \calH_+$ and  $u=u_+ - u_-$ (Jordan decomposition) with $\duality{u_+}{u_-}=0$. This decomposition is unique.  Similarly, define
\[
    u \vee v := \Proj(u,v+\calH_+) \quad \text{and} \quad u \wedge v:= \Proj(u,v-\calH_+)
\]
for $u,v\in \calH^J$. This notation is reminiscent of the vector lattice case and is justified by \cite[Lemma~4.4(i)]{Ci97}, which says that if $\sup(u,v)$ exists (to be understood in the order sense) then $\sup(u,v)=u \vee v$ and if $\inf(u,v)$ exists (in the order sense), then $\inf(u,v)=u\wedge v$. For this reason, the vectors $u_+$ and $u_-$ are respectively called the \emph{positive} and \emph{negative} parts of $u$.
In fact, we even know from \cite[Lemma~4.4(iii)]{Ci97} that
\[
    u \vee v= u+ (v-u)_+ = v+ (u-v)_+.
\]
In particular, $u \vee v$ is, at least, an upper bound of both $u$ and $v$. Analogously, $u \wedge v$ is a lower bound of both $u$ and $v$.
Moreover, just like in the vector lattice case, the above projections satisfy the standard properties.
For instance, $u\vee 0=u_+$, $u\wedge 0=-u_-$, and $u \vee v+ u\wedge v=u+v$ for all $u,v\in \calH^J$. A proof of these identities can be found in \cite[Lemma~4.4]{Ci97}. Our arguments freely make use of the properties listed in \cite[Lemma~4.4]{Ci97} without citing.

Throughout, we let $(\frakM, \calH, \calH_+,J)$ be a \emph{standard form}, that is, a quadruple consisting of a von Neumann algebra $\frakM$ acting on a complex Hilbert space $\calH$ (in particular, $\frakM\subseteq \calL(\calH))$ endowed with a self-polar cone $\calH_+$ and an anti-linear involution $J : \calH \to \calH$ such that 
\begin{enumerate}[(a)]
    \item $J\frakM J=\frakM'$, where $\frakM'$ is the commutant of $\frakM$ in $\calL (\calH )$,
    \item $JaJ= a^*$ for all $a \in \frakM \cap \frakM'$,
    \item $J u=u$ for all $u \in \calH_+$, and 
    \item $aJaJ(\calH_+)\subseteq \calH_+$ for all $a\in \frakM$.
\end{enumerate}

Positive cones arising from standard forms of von Neumann algebras have been characterised by Connes \cite{Co74}. For every von Neumann algebra, there is a standard form, and if $(\frakM, \calH, \calH_+,J)$ and $(\hat{\frakM}, \hat{\calH}, \hat{\calH}_+,\hat{J})$ are two standard forms such that the von Neumann algebras $\frakM$ and $\hat{\frakM}$ are isomorphic via an isomorphism, say $\Phi$, then there exists a unitary operator $U: \calH \to \hat{\calH}$ such that $\Phi (\frakM) = U\frakM U^{-1}$, $\hat{J} = UJU^{-1}$, and $U (\calH_+) = \hat{\calH}_+$ (see \cite[Theorem IX.1.4]{Ta03II} 
or \cite[Proposition~2.16]{Ci08}). In this sense, the standard form is uniquely determined by the von Neumann algebra and is usually denoted by $(\frakM, L^2(\frakM), L_+^2(\frakM), J)$. The notation $\calH=L^2(\frakM)$ is reminiscent of the case when the von Neumann algebra $\frakM$ has a semi-finite, normal, and faithful trace $\tau$ because then $L^2 (\frakM )$ is the associated non-commutative $L^2$-space which is, by definition, the completion of the space $\{ a\in\frakM : \tau (aa^*) <\infty \}$ with respect to the inner product $\duality{a}{b} := \tau (ab^*)$.

An operator $T: \calH \to \calH $ is said to be \emph{real} if $ T (\calH^J) \subseteq \calH^J $ and \emph{positive} if $ T (\calH_{+}) \subseteq  \calH_{+} $. Similarly, a semigroup $(T_t)_{t\geq 0}$ is \emph{real} (respectively, \emph{positive}) if the operators $T_t$ are real (respectively, positive) for every $t\geq 0$. 

Lastly, if $(T_t)_{t\geq 0}$ is a real $C_0$-semigroup generated by a sesquilinear form $\fraka$, then $\dom{\fraka}^J$ is generating for $\dom{\fraka}$ and $\fraka(u,v)\in \bbR$ whenever $u,v\in \dom{\fraka}^J$. Indeed, this follows from Theorem~\ref{thm:barthelemy}; the proof actually follows exactly as the proof of \cite[Proposition~2.5]{Ou04} taking $C=\calH^J$.

\subsection*{Generalised ideals}

Let $(\Omega,\mu)$ be a $\sigma$-finite measure space and $\calH:=L^2(\Omega,\mu)$. Let $(T_t)_{t\ge 0}$ and $(S_t)_{t\ge 0}$ be real and self-adjoint $C_0$-semigroups on the Hilbert space $\calH$ generated by densely defined, closed, and symmetric sesquilinear forms $\fraka: \dom{\fraka} \times \dom{\fraka} \to \bbC$ and $\frakb: \dom{\frakb} \times \dom{\frakb} \to \bbC$ respectively. If $(S_t)_{t\ge 0}$ dominates $(T_t)_{t\ge 0}$, that is, 
\[
    \modulus{T_t u}\le S_t\modulus{u}
\]
for all $u\in \calH$, then $\dom{\frakb}$ is a  sublattice of $\calH$  \cite[Proposition~2.20]{Ou04} and $\dom{\fraka}$ is a so-called \emph{generalised ideal} of $\dom{\frakb}$; see \cite[Theorem 4.1]{MaVoVo05} or \cite[Theorem~2.21]{Ou04}. We point out that generalised ideals were simply called \emph{ideals} in \cite{Ou04}. However, we refrain from using that terminology as, in general, it does not coincide with the notion of a \emph{lattice ideal}.

In the non-commutative setting, when $(\frakM, \calH, \calH_+,J)$ is a standard form of a von Neumann algebra $\frakM$, and when $\calU$ and $\calV$ are two subspaces of the Hilbert space $\calH$, then we will say that the real space $\calU^J$ is a \emph{generalised ideal} of the real space $\calV^J$ if both the conditions
\begin{equation}\label{cond:one-generalised ideal}
        (u-v)_+ - (u+v)_- \in \calU
\end{equation} 
and
\begin{equation} \label{cond:two-generalised ideal}
    (u-v)_+ + (u+v)_- \in \calV
\end{equation}
hold whenever $u\in \calU^J$ and $v\in\calV^J$.

Note that if $\calU^J$ is a generalised ideal of $\calV^J$, then condition \eqref{cond:two-generalised ideal} above implies that
\begin{equation}
    \label{eq:ideal-modulus}
    u\in \calU^J \Rightarrow \modulus{u} := u_+ + u_- \in \calV.
\end{equation}

A rewriting of the  terms in~\eqref{cond:one-generalised ideal} and~\eqref{cond:two-generalised ideal} yields the following alternate definition of generalised ideals.

\begin{proposition}
    \label{prop:prop-ideal-equivalence}
    Let $(\frakM, \calH, \calH_+,J)$ be a standard form of the von Neumann algebra $\frakM$ and let $\calU$ and $\calV$ be two subspaces of $\calH$. 
    
    Then $\calU^J$ is a generalised ideal of $\calV^J$ if and only if
    \[ 
    (u+v)_+ - (v-u)_+ \in \calU \quad \text{
        and} \quad  (u+v)_+ + (v-u)_+ \in \calV 
    \]
    whenever $u\in \calU^J$ and $v\in \calV^J$.
\end{proposition}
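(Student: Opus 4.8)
The plan is to deduce each of the two formulations from the other by a single device: since $\calU^J$ and $\calV^J$ are real linear subspaces of $\calH$, they are invariant under the sign change $x\mapsto -x$, so any condition quantified over all $u\in\calU^J$ and $v\in\calV^J$ remains equivalent after the substitution $v\mapsto -v$ (or $u\mapsto -u$). Combined with the elementary sign identity for the Jordan decomposition, this turns one pair of membership conditions into the other with no further input.

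First I would record the sign identities needed. From the very definitions $w_+=\Proj(w,\calH_+)$ and $w_-=\Proj(-w,\calH_+)$ one reads off $w_-=(-w)_+$ for every $w\in\calH^J$. Applying this to $w=u-v$ gives $(u-v)_-=(v-u)_+$, and applying it to $w=u+v$ gives $(u+v)_-=(-(u+v))_+=(-v-u)_+$. These two identities are the only algebraic facts the argument uses.

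Next, assuming $\calU^J$ is a generalised ideal of $\calV^J$, so that~\eqref{cond:one-generalised ideal} and~\eqref{cond:two-generalised ideal} hold for all $u\in\calU^J$ and $v\in\calV^J$, I would substitute $-v$ for $v$ (legitimate because $v\in\calV^J\iff -v\in\calV^J$). This replaces $(u-v)_+$ by $(u+v)_+$ and $(u+v)_-$ by $(u-v)_-=(v-u)_+$, so~\eqref{cond:one-generalised ideal} becomes $(u+v)_+-(v-u)_+\in\calU$ and~\eqref{cond:two-generalised ideal} becomes $(u+v)_++(v-u)_+\in\calV$, which are exactly the claimed conditions. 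For the converse I would start from the new conditions and again substitute $-v$ for $v$: the term $(u+v)_+$ becomes $(u-v)_+$, while $(v-u)_+$ becomes $(-v-u)_+=(u+v)_-$, recovering~\eqref{cond:one-generalised ideal} and~\eqref{cond:two-generalised ideal}.

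The entire proof is thus a matter of these substitutions together with the identity $w_-=(-w)_+$, and I do not expect any genuine obstacle. The only point deserving (minor) care is the validity of replacing $v$ by $-v$ inside a universally quantified statement, which rests precisely on $\calV^J$ being a real linear subspace, and likewise for $\calU^J$; I would state this invariance explicitly at the outset so that both implications reduce to substitution.
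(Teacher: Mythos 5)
Your proof is correct, but it follows a genuinely different route from the paper's. The paper proves a \emph{pointwise} identity: using the Jordan decomposition $w = w_+ - w_-$ (in the form $w_+ = w + w_-$, together with $(v-u)_- = (u-v)_+$), it shows
\[
(u+v)_+ - (v-u)_+ = 2u - \bigl((u-v)_+ - (u+v)_-\bigr),
\qquad
(u+v)_+ + (v-u)_+ = 2v + \bigl((u-v)_+ + (u+v)_-\bigr),
\]
so that for each \emph{fixed} pair $(u,v)$ the new membership conditions differ from \eqref{cond:one-generalised ideal} and \eqref{cond:two-generalised ideal} by the elements $2u\in\calU$ and $2v\in\calV$, and are therefore equivalent pair by pair. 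Your argument instead exploits the symmetry $v\mapsto -v$ of the quantifier domain $\calV^J$ together with the trivial identity $w_- = (-w)_+$: it matches the old conditions at $(u,-v)$ with the new conditions at $(u,v)$, yielding equivalence of the two \emph{universally quantified} statements rather than of the conditions for each fixed pair. Both arguments are complete and correct proofs of the proposition as stated, and yours is arguably more economical (it never invokes $w = w_+ - w_-$ at all). What the paper's computation buys, however, is the explicit identities $\tilde u = u - \hat u$ and $\tilde v = v + \hat v$ (in the notation of Theorem~\ref{thm:general}), which are reused verbatim in the proof of that theorem to show that the two expressions for the orthogonal projection onto $C$ coincide, i.e.\ $(u-\hat u, v+\hat v) = (\tilde u, \tilde v)$ in \eqref{eq:projection-general}; your substitution argument does not produce these identities, so if one adopted your proof, that later step would need a separate (short) computation.
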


\begin{proof}
For $u$, $v\in \calH^J$, we observe that 
    \begin{equation}\label{eq:ideal-equivalence-1}
        \begin{aligned}
            (u+v)_+ - (v-u)_+ &= u + v +(u+v)_- -\big(v-u + (u-v)_+\big)\\
                              &= 2u - \big((u-v)_+ - (u+v)_-\big)
        \end{aligned}
    \end{equation}
    and
    \begin{equation}\label{eq:ideal-equivalence-2}
        \begin{aligned}
            (u+v)_+ + (v-u)_+ &= u + v +(u+v)_- +\big(v-u + (u-v)_+\big)\\
                              &= 2v + \big((u-v)_+ + (u+v)_-\big);
        \end{aligned}
    \end{equation}
from which the assertion is immediate.
\end{proof}


We end this section by showing that in the commutative setting, if $\calV$ is a sublattice, then $\calU^J$ is a generalised ideal of $\calV^J$ in the sense of our definition if and only if $\calU^J$ is a generalised ideal of $\calV^J$ in the sense of \cite[Definition~3.3]{MaVoVo05}. Keep in mind that this is only a statement about the real parts of the spaces $\calU$ and $\calV$, and not about the full spaces themselves.

\begin{proposition}
    \label{prop:ideal-commutative}
    Let $(\Omega,\mu)$ be a $\sigma$-finite measure space, let $\calH:=L^2(\Omega,\mu)$, and let $\calU$ and $\calV$ be two subspaces of $\calH$. 
    
    If $\calV$ is a sublattice, then $\calU^J$ is a generalised ideal of $\calV^J$ in the sense of our definition if and only if 
    \begin{enumerate}[\upshape (a)]
        \item whenever $u\in \calU^J$, then $\modulus{u}\in \calV$, and
        \item if $u \in \calU^J, v\in \calV^J$, and $\modulus{v}\le \modulus{u}$,  then $v\sgn u \in \calU$,
    \end{enumerate}
    that is, $\calU^J$ is a generalised ideal of $\calV^J$ in the sense of \cite[Definition 3.3]{MaVoVo05}. 
\end{proposition}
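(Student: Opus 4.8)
The plan is to carry out the entire argument pointwise, exploiting that in the commutative situation $\calH=L^2(\Omega,\mu)$ the self-polar cone is the cone of a.e.\ nonnegative functions, $\calH^J$ consists of the real-valued functions, $u_+,u_-$ are the pointwise positive and negative parts, $u\vee v$ and $u\wedge v$ are the pointwise maximum and minimum, and $\sgn(u)$ is the pointwise sign. In this language the two conditions \eqref{cond:one-generalised ideal} and \eqref{cond:two-generalised ideal} that jointly express ``$\calU^J$ is a generalised ideal of $\calV^J$ in our sense'' become pointwise identities in $u,v$, which I intend to evaluate by a case distinction according to the sign of $u$ and the size of $v$ relative to $\pm\modulus u$ (equivalently, the signs of $u\pm v$). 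Throughout I will use that $\calU$ and $\calV$ are subspaces, and that once condition (a) is available, $\modulus u\in\calV$ and, by the sublattice hypothesis, $\modulus v,v_+,v_-\in\calV$ as well.

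For the reverse implication I assume (a) and (b). Condition (a) gives $\modulus u\in\calV$ for $u\in\calU^J$. To obtain \eqref{cond:two-generalised ideal} I would establish the pointwise identity
\[
(u-v)_+ + (u+v)_- = (\modulus u\vee\modulus v) - v ,
\]
whose right-hand side lies in $\calV$ since $\modulus u,\modulus v,v\in\calV$ and $\calV$ is a sublattice. To obtain \eqref{cond:one-generalised ideal} I would show that the corresponding expression has, at every point, the same sign as $u$, namely
\[
(u-v)_+ - (u+v)_- = \sgn(u)\cdot r , \qquad r := \big((\modulus u - v)\wedge 2\modulus u\big)\vee 0 ,
\]
where $r\in\calV^J$ (again by (a) and the sublattice property) and $0\le r\le 2\modulus u$. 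Since a single application of (b) only reaches elements dominated by $\modulus u$, I would split $r=(r\wedge\modulus u)+(r-\modulus u)_+$ into two summands, each in $\calV^J$ and each bounded by $\modulus u$, and apply (b) to each; adding the two resulting members of $\calU$ yields $\sgn(u)\,r\in\calU$.

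For the forward implication I assume \eqref{cond:one-generalised ideal} and \eqref{cond:two-generalised ideal}. Taking $v=0$ in \eqref{cond:two-generalised ideal} (equivalently, invoking \eqref{eq:ideal-modulus}) gives (a). For (b), let $u\in\calU^J$, $v\in\calV^J$ with $\modulus v\le\modulus u$, and write $v=v_+-v_-$. The heart of the matter is the pointwise identity, valid whenever $0\le s\le\modulus u$,
\[
\big(u-(\modulus u - s)\big)_+ - \big(u+(\modulus u - s)\big)_- = s\,\sgn(u) ,
\]
which I would again verify by the sign case distinction. Applying it with $s=v_+$ and with $s=v_-$ (both satisfy $0\le s\le\modulus v\le\modulus u$) exhibits $v_\pm\,\sgn(u)$ as an expression of the form \eqref{cond:one-generalised ideal} with second argument $\modulus u - v_\pm\in\calV^J$ (using (a) and the sublattice property), hence in $\calU$; subtracting gives $v\,\sgn(u)=v_+\,\sgn(u)-v_-\,\sgn(u)\in\calU$.

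The routine part is the verification of the three displayed pointwise identities, obtained by decomposing $\Omega$ into $\{u>0\}$, $\{u<0\}$, $\{u=0\}$ and, on each piece, distinguishing the position of $v$ relative to $\pm\modulus u$. The one genuine subtlety, and the step I expect to need the most care, is that the expression in \eqref{cond:one-generalised ideal} can attain absolute value up to $2\modulus u$, whereas condition (b) controls only elements dominated by $\modulus u$; this is exactly what forces the two-term decomposition $r=(r\wedge\modulus u)+(r-\modulus u)_+$ in the reverse direction and the splitting $v=v_+-v_-$ in the forward direction, both arranged so that every quantity fed into (b) or into \eqref{cond:one-generalised ideal} is dominated by $\modulus u$.
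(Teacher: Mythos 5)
Your proposal is correct, but it follows a genuinely different route from the paper. The paper does not argue pointwise at all: it rewrites the pair $\bigl(\tilde u,\tilde v\bigr)$ from Proposition~\ref{prop:prop-ideal-equivalence} by a case distinction, recognises it as the orthogonal projection $P(u,v)$ of $(u,v)$ onto the closed convex set $\{(a,b)\in\calH\times\calH : \modulus{a}\le b\}$, and then simply cites \cite[Proposition~3.5]{MaVoVo05}, which states that (for $\calV$ a sublattice) invariance of $\calU^J\times\calV^J$ under this projection is equivalent to conditions~(a) and~(b); Proposition~\ref{prop:prop-ideal-equivalence} then closes the loop. You instead prove both implications from scratch: in one direction via the identities
\[
(u-v)_+ + (u+v)_- = \bigl(\modulus{u}\vee\modulus{v}\bigr) - v,
\qquad
(u-v)_+ - (u+v)_- = \sgn(u)\,\Bigl(\bigl((\modulus{u}-v)\wedge 2\modulus{u}\bigr)\vee 0\Bigr),
\]
and in the other via $\bigl(u-(\modulus{u}-s)\bigr)_+ - \bigl(u+(\modulus{u}-s)\bigr)_- = s\,\sgn(u)$ for $0\le s\le\modulus{u}$; I have checked all three identities and they hold pointwise, and your truncation $r=(r\wedge\modulus{u})+(r-\modulus{u})_+$ and the splitting $v=v_+-v_-$ correctly resolve the only real obstruction, namely that~(b) and condition~\eqref{cond:one-generalised ideal} control elements dominated by $\modulus{u}$ while the relevant expressions can reach $2\modulus{u}$. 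What the paper's route buys is brevity and coherence with the projection machinery used throughout (the same computation resurfaces in the proof of Theorem~\ref{thm:general}), at the cost of outsourcing the substance to \cite{MaVoVo05}; what your route buys is a self-contained, elementary proof that makes the lattice mechanics explicit and does not depend on the cited result --- a worthwhile trade if one wants the paper to stand alone, though it is noticeably longer.
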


\begin{proof}
    For real $u,v\in \calH$, we set
    \[
        \tilde u:= \frac{1}{2} (u+v)_+ -\frac{1}{2} (v-u)_+ \quad \text{and}\quad \tilde v=\frac{1}{2} (u+v)_+ +\frac{1}{2} (v-u)_+.
    \]
    
    The commutative setting allows us to rewrite
    \begin{align*}
        \big( \tilde u , \tilde v \big) 
        &= \begin{cases}
                    (u,v),\quad &\text{if }\modulus{u}\le v\\
                    (0,0),\quad  &\text{if }\modulus{u}\le -v\\
                    \frac{1}{2}(u+v,u+v),\quad  &\text{if }\modulus{v}\le u\\
                    \frac{1}{2}(u-v,v-u),\quad  &\text{if }\modulus{v}\le -u
            \end{cases} \\
        &= \frac{1}{2}\big((\modulus{u}+ \modulus{u}\wedge v)_+\sgn u, (v+ \modulus{u}\vee v)_+  \big);
    \end{align*}
    which is exactly the projection $P(u,v)$ onto the set
    \[
         \{(a,b)\in \calH \times \calH : \modulus{a}\le b\}
    \]
    for real $u,v\in \calH$.

By assumption, $\calV$ is a sublattice of $\calH$. Therefore, by \cite[Propositon~3.5]{MaVoVo05}, the projection $P$ leaves $\calU^J\times \calV^J$ invariant if and only if both~(a) and~(b) hold. With the aid of Proposition~\ref{prop:prop-ideal-equivalence}, it follows that $\calU^J$ is a generalised ideal of $\calV^J$ if and only if both~(a) and~(b) are true.
\end{proof}

\section{Domination of a real semigroup by a positive semigroup}
    \label{sec:domination-general}

Let $(\frakM, \calH, \calH_+,J)$ be a standard form of the von Neumann algebra $\frakM$ and let $(T_t)_{t\ge 0}$ and $(S_t)_{t\ge 0}$ be real $C_0$-semigroups on the Hilbert space $\calH$. We shall say that the semigroup $(T_t)_{t\ge 0}$ is \textit{dominated}  by $(S_t)_{t\ge 0}$ if 
\[
    \left( -v \le u \le v \right) \Rightarrow \left(- S_t v \le T_t u\le S_t v \right)
\]
for all $u,v\in \calH^J$ and all $t\ge 0$. 

In the commutative case, that is, if $\calH$ is a Hilbert lattice, each $u\in \calH^J$ satisfies $\modulus{u}=\sup(-u,u)$. Therefore, in this case, the above definition of domination is equivalent to the usual definition of domination \cite[Section~2.3]{Ou04}, which is,
\[
     \modulus{T_t u}\le S_t \modulus{u} \quad (u\in \calH, t\ge 0).
\]
In this section, we characterise the domination of $(T_t)_{t\ge 0}$ by $(S_t)_{t\ge 0}$ generated by sesquilinear forms $\fraka$ and $\frakb$ in terms of $\fraka$ and $\frakb$. Recall that we deliberately identify symmetric sesquilinear forms with extended quadratic forms.

\begin{theorem}
    \label{thm:general}
    Let $(\frakM, \calH, \calH_+,J)$ be a standard form of the von Neumann algebra $\frakM$ and let $(T_t)_{t\ge 0}$ and $(S_t)_{t\ge 0}$ be real and self-adjoint $C_0$-semigroups on the Hilbert space $\calH$ generated by densely defined, closed, symmetric, and accretive sesquilinear forms $\fraka: \dom{\fraka} \times \dom{\fraka} \to \bbC$ and $\frakb: \dom{\frakb} \times \dom{\frakb} \to \bbC$ respectively. 

    Assume that the semigroup $(S_t)_{t\ge 0}$ is positive. Then the following conditions are equivalent.
    \begin{enumerate}[\upshape(i)]
        \item The semigroup $(T_t)_{t\ge 0}$ is dominated by $(S_t)_{t\ge 0}$.

        \item The inequality $\fraka(u-\hat u) + \frakb(v+\hat v)\le \fraka(u)+\frakb(v)$ holds for all $u,v\in \calH^J$.

        \item The inequality $\fraka(\tilde u) + \frakb(\tilde v)\le \fraka(u)+\frakb(v)$ holds for all $u,v\in \calH^J$.

        \item The subspace $\dom{\fraka}^J$ is a generalised ideal of $\dom{\frakb}^J$ and the inequality $\fraka(u, \hat u) \ge \frakb(v, \hat v)$ holds for all $(u,v)\in \dom{\fraka}^J\times\dom{\frakb}^J$.

        \item The subspace $\dom{\fraka}^J$ is a generalised ideal of $\dom{\frakb}^J$ and the inequality $\fraka(u,\tilde u) +\frakb(v,\tilde v) \le \fraka(u)+\frakb(v)$ holds for all $(u,v)\in \dom{\fraka}^J\times\dom{\frakb}^J$.

        \item The subspace $\dom{\fraka}^J$ is a generalised ideal of $\dom{\frakb}^J$ and the inequality $\fraka(\hat u)+\frakb(\hat v) \le \fraka(u, \hat u) - \frakb(v, \hat v)$ holds for all $(u,v)\in \dom{\fraka}^J\times\dom{\frakb}^J$.

        \item The subspace $\dom{\fraka}^J$ is a generalised ideal of $\dom{\frakb}^J$ and the inequality $\fraka(\tilde u)+\frakb(\tilde u)\le \fraka(u,\tilde u)+\frakb(v,\tilde v)$ holds for all $(u,v)\in \dom{\fraka}^J\times\dom{\frakb}^J$.
    \end{enumerate}
    Here, we have used the notations,
    \[
        \hat u:= \frac{1}{2} (u-v)_+ -\frac{1}{2} (u+v)_- \quad \text{and}\quad \hat v:=\frac{1}{2} (u-v)_+ +\frac{1}{2} (u+v)_-
    \]
    and
    \[
        \tilde u:= \frac{1}{2} (u+v)_+ -\frac{1}{2} (v-u)_+ \quad \text{and}\quad \tilde v:=\frac{1}{2} (u+v)_+ +\frac{1}{2} (v-u)_+
    \]
    for $u,v\in\calH^J$.
\end{theorem}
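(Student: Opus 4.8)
The plan is to reduce the entire statement to a single application of Theorem~\ref{thm:barthelemy} on the product Hilbert space $\calK := \calH\oplus\calH$. On $\calK$ I would consider the $C_0$-semigroup $R_t := T_t\oplus S_t$, which is generated by the densely defined, closed, symmetric, and accretive form $\frakc := \fraka\oplus\frakb$, where $\frakc(u,v) = \fraka(u)+\frakb(v)$ on $\dom{\frakc} = \dom{\fraka}\times\dom{\frakb}$. Writing the order relation $-v\le u\le v$ as $u+v\in\calH_+$ and $v-u\in\calH_+$, one sees that $(T_t)_{t\ge0}$ is dominated by $(S_t)_{t\ge0}$ if and only if $R_t C\subseteq C$ for every $t\ge0$, where
\[
    C := \{(u,v)\in\calK : u+v\in\calH_+,\ v-u\in\calH_+\}.
\]
I would first record that $C$ is a closed convex cone contained in the closed convex, $R_t$-invariant (because the semigroups are real) subspace $C_0 := \calH^J\oplus\calH^J$, and that $C$ is $J$-invariant, so its orthogonal projection $P$ commutes with $J$ and therefore maps real vectors to real vectors.

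Next I would compute $P$ on real vectors. The $\bbC$-linear map $\Phi(u,v) := \tfrac{1}{\sqrt2}(u+v,\, v-u)$ is unitary and carries $C$ onto $\calH_+\oplus\calH_+$, whose projection is the coordinatewise positive part $(a,b)\mapsto(\Proj(a,\calH_+),\Proj(b,\calH_+))$. Conjugating back gives, for $u,v\in\calH^J$,
\[
    P(u,v) = \big(\tfrac12((u+v)_+-(v-u)_+),\ \tfrac12((u+v)_++(v-u)_+)\big) = (\tilde u,\tilde v).
\]
Dividing \eqref{eq:ideal-equivalence-1} and \eqref{eq:ideal-equivalence-2} by $2$ yields $\tilde u = u-\hat u$ and $\tilde v = v+\hat v$, so that $w-Pw = (\hat u,-\hat v)$ for $w=(u,v)\in C_0$. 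Moreover, by Proposition~\ref{prop:prop-ideal-equivalence}, the inclusion $P(\dom{\frakc}\cap C_0)\subseteq\dom{\frakc}$, that is, $\tilde u\in\dom{\fraka}$ and $\tilde v\in\dom{\frakb}$, is exactly the statement that $\dom{\fraka}^J$ is a generalised ideal of $\dom{\frakb}^J$.

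With these dictionaries in place, each condition of the theorem becomes the translation of one condition of Theorem~\ref{thm:barthelemy} applied to the data $(\frakc,C,C_0,P)$. Conditions (ii) and (iii) are the quadratic inequality $\frakc(Pw)\le\frakc(w)$ on $C_0$, and they coincide since $\tilde u=u-\hat u$, $\tilde v=v+\hat v$. Conditions (iv) and (v) are the generalised-ideal condition together with $\frakc(w,w-Pw)\ge0$ on $\dom{\frakc}\cap C_0$; expanding $\frakc(w,w-Pw)=\fraka(u,\hat u)-\frakb(v,\hat v)$ shows that (iv) and (v) coincide. Conditions (vi) and (vii) are the generalised-ideal condition together with $\frakc(Pw,w-Pw)\ge0$ on $\dom{\frakc}\cap C_0$; expanding $\frakc(Pw,w-Pw)=\fraka(u,\hat u)-\fraka(\hat u)-\frakb(v,\hat v)-\frakb(\hat v)$ via $\tilde u=u-\hat u$, $\tilde v=v+\hat v$ and symmetry of the forms identifies it with both (vi) and (vii). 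Thus the equivalences inside each pair are elementary bilinear algebra.

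The cross-pair equivalences and the link to domination are where Theorem~\ref{thm:barthelemy} enters, and this is the main subtlety: its relative part only lists the quadratic inequality and the condition $\re\frakc(u,u-Pu)\ge0$, not the analogue with $\re\frakc(Pu,u-Pu)$. I would therefore argue asymmetrically. For the forward implications, domination gives $R_tC\subseteq C$, hence the full (non-relative) conditions of Theorem~\ref{thm:barthelemy} hold for $\frakc$; restricting them to real $w\in\dom{\fraka}^J\times\dom{\frakb}^J$, where the full-space projection agrees with $(\tilde u,\tilde v)$ since $C\subseteq C_0$ and $C$ is $J$-invariant, delivers all of (ii)--(vii). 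For the converse, (iii) returns domination through the relative quadratic inequality and (v) through the relative mixed inequality. To feed (vi)/(vii) back without a relative form of the $\re\frakc(Pu,u-Pu)$ condition, I would use accretivity: as $w-Pw=(\hat u,-\hat v)$ is real,
\[
    \frakc(w,w-Pw) = \frakc(Pw,w-Pw) + \frakc(w-Pw) \ge \frakc(Pw,w-Pw),
\]
because $\frakc(w-Pw)=\fraka(\hat u)+\frakb(\hat v)\ge0$. Hence the inequality in (vi) forces that in (v), giving (vi) $\Rightarrow$ (v) $\Rightarrow$ domination. I expect this bookkeeping around the missing relative condition to be the only real obstacle; the rest is the coordinate computation of $P$ and routine manipulation of the forms.
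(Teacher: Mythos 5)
Your proposal is correct, and its skeleton is the same as the paper's: the product semigroup $R_t=T_t\oplus S_t$ generated by $\frakc=\fraka\oplus\frakb$, the closed convex set $C=\{(a,b):-b\le a\le b\}$, the identification of domination with $R_tC\subseteq C$, the projection formula $P(u,v)=(\tilde u,\tilde v)=(u-\hat u,v+\hat v)$, and the translation of the conditions of Theorem~\ref{thm:barthelemy} via Proposition~\ref{prop:prop-ideal-equivalence}. Two of your sub-arguments are genuinely different, and both hold up. First, you obtain the projection formula by conjugating with the unitary $\Phi(u,v)=\tfrac{1}{\sqrt2}(u+v,\,v-u)$, which carries $C$ onto $\calH_+\times\calH_+$, so that $P$ becomes the coordinatewise positive part (using positive homogeneity of the projection onto a cone); the paper instead verifies the formula directly through the variational characterisation \eqref{eq:projection-criterion}. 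Your computation is shorter and even valid on all of $\calH\times\calH$, not only on real vectors. Second, you correctly notice that the relative (``$C_0$'') part of Theorem~\ref{thm:barthelemy} lists only the analogues of its conditions (ii) and (iii), with no relative counterpart of $\re\frakc(Pu,u-Pu)\ge 0$, and you close the cycle asymmetrically: domination yields (ii)--(vii) by restricting the non-relative conditions to real vectors, while (vi)/(vii) feed back into (v) through the accretivity identity $\frakc(w,w-Pw)=\frakc(Pw,w-Pw)+\frakc(w-Pw)\ge\frakc(Pw,w-Pw)$, and (v) returns domination via the relative condition. The paper instead treats (iv)--(vii) symmetrically by invoking the non-relative conditions of Theorem~\ref{thm:barthelemy}, which tacitly requires extending the domain invariance and the form inequalities from real vectors to all of $\dom{\frakc}$; this extension is legitimate (since $C$ lies in the real part one has $Pw=P(\re w)$, and reality plus accretivity of $\frakc$ handle the imaginary component), but the paper leaves it implicit under the phrase ``recalling that the semigroup is real.'' Your bookkeeping makes this subtlety explicit and avoids the extension entirely, at the modest cost of an asymmetric chain of implications; either treatment is sound.
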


We point out that if $(u,v)\in \dom{\fraka}^J\times\dom{\frakb}^J$, then the inequality in condition~(iii) of Theorem~\ref{thm:general} implies that $(\tilde u,\tilde v)\in \dom{\fraka}\times\dom{\frakb}$. In particular, $\dom{\fraka}^J$ is a generalised ideal of $\dom{\frakb}^J$ by Proposition~\ref{prop:prop-ideal-equivalence}. The same conclusion holds for the inequality in condition~(ii).

\begin{proof}[Proof of Theorem~\ref{thm:general}]
    We start by considering the closed convex set
    \[
        C := \{ (a,b)\in \calH \times \calH:  -b\le a\le b\}
    \]
    and the product semigroup on $\calH\times \calH$ given by
    \[
        R_t:= \begin{pmatrix}
                    T_t & 0     \\
                    0    & S_t  
              \end{pmatrix} \qquad (t\ge 0).
    \]
    Since the semigroup $(R_t)_{t\ge 0}$ is real, the domination condition~(i) is equivalent to the closed convex set $C$ being invariant under the semigroup $(R_t)_{t\ge 0}$,  that is,  $R_t C \subseteq C$ for all $t\ge 0$. This allows us to make use of Theorem~\ref{thm:barthelemy}. 
    For this purpose, we show that the orthogonal projection $P$ of $\calH\times \calH$ onto $C$ satisfies
    \begin{equation}\label{eq:projection-general}
        P(u,v)=(u-\hat u, v +\hat v)= (\tilde u,\tilde v)\quad (u,v\in \calH^J).
    \end{equation}
    Actually, the second equality is a direct consequence of~\eqref{eq:ideal-equivalence-1} and~\eqref{eq:ideal-equivalence-2}.
    
    In order to show the first equality in~\eqref{eq:projection-general}, we make use of the following equivalence:~$(u',v')=P(u,v)$ if and only if 
    \begin{equation}
        \label{eq:projection-criterion}
        (u',v')\in C \text{ and } \re\duality{(u,v)-(u',v')}{(a,b)-(u',v')}\le 0 \text{ for all }(a,b)\in C.
    \end{equation}
    So, fix $u,v\in \calH^J$. The inequalities
    \begin{align*}
        v+\hat v-(u-\hat u)  = v - u + (u-v)_+
                             = (u-v)_- 
                            \ge 0 
    \end{align*}
    and
    \begin{align*}
        v+\hat v+(u-\hat u)  = u + v + (u+v)_-
                             = (u+v)_+
                             \ge 0 
    \end{align*}
    tell us that $(u-\hat u,v+\hat v)\in C$. In addition, for $(a,b)\in C$, we have
    \begin{align*}
        4A &:= 4\duality{(u,v)-(u-\hat u,v+\hat v)}{(a,b)-(u-\hat u,v+\hat v)}\\
          & = 2\duality{\big((u-v)_+ -(u+v)_-, - (u-v)_+ -(u+v)_- \big) }{ (a+\hat u,b-\hat v) }\\
          & = 2\duality{(u-v)_+}{a-b} -2\duality{(u+v)_-}{a+b}\\
          &\qquad\qquad+\duality{(u-v)_+}{(u-v)_-}  +\duality{(u+v)_-}{(u+v)_+}
    \end{align*}
    Using orthogonality of the positive and the negative part, we deduce that
    \[
        2A = \duality{(u-v)_+}{a-b} -\duality{(u+v)_-}{a+b} \le 0.
    \]
    We have thus verified that both the conditions in~\eqref{eq:projection-criterion} are fulfilled. Consequently, the first equality in~\eqref{eq:projection-general} is true.

    Next, note that the semigroup $(R_t)_{t\ge 0}$ on $\calH\times \calH$ is generated by the form $\frakc$ with domain  $\dom{\frakc}=\dom{\fraka}\times \dom{\frakb}$ and
    \[
        \frakc( (u_0,v_0), (u_1, v_1) ) = \fraka(u_0, u_1)+ \frakb(v_0,v_1).
    \]
    As the semigroup is real and the form is symmetric, so the semigroup $(R_t)_{t\ge 0}$ leaves $C$ invariant if and only if $\frakc(P(u,v))\le \frakc( (u,v))$ for all $u,v\in \calH^J$ (Theorem~\ref{thm:barthelemy}). The equivalences ``(i) $\Leftrightarrow$ (ii)'' and ``(i) $\Leftrightarrow$ (iii)'' can now be deduced by substituting~\eqref{eq:projection-general}.

    Next, recalling that the semigroup $(R_t)_{t\ge 0}$ is real, it follows from~\eqref{eq:projection-general} that $P$ leaves the subspace $\dom{\frakc}$ invariant if and only if
    \[
         (\hat u,\hat v) \in \dom{\fraka}\times \dom{\frakb} \quad 
         \text{for all }(u,v)\in \dom{\fraka}^J\times\dom{\frakb}^J 
    \]
    or equivalently 
    \[
         (\tilde u,\tilde v) \in \dom{\fraka}\times\dom{\frakb} \quad 
         \text{for all }(u,v)\in \dom{\fraka}^J\times\dom{\frakb}^J. 
    \]
    In other words, $P$ leaves $\dom{\frakc}$ invariant if and only if $\dom{\fraka}^J$ is a generalised ideal of $\dom{\frakb}^J$; see Proposition~\ref{prop:prop-ideal-equivalence}.
    
    Now, fix $(u,v)\in \dom{\fraka}^J\times\dom{\frakb}^J$. Once again employing~\eqref{eq:projection-general}, we obtain that
    \begin{align*}
        \frakc( (u,v), (u,v)-P(u,v) ) & = \fraka(u, \hat u) - \frakb(v, \hat v)\\
                                      & = \fraka(u)- \fraka(u,\tilde u) +\frakb(v)- \frakb(v,\tilde v)  
    \end{align*}
    and
    \begin{align*}
        \frakc( P(u,v), (u,v)-P(u,v) ) & =  \fraka(u,\hat u)-\fraka(\hat u) - \frakb(v,\hat v)-\frakb(\hat v)\\
                                      & = \fraka(u,\tilde u)-\fraka(\tilde u)+\frakb(v,\tilde  v)-\frakb(\tilde v).
    \end{align*}
    In particular, 
    \begin{align*}
        \frakc( (u,v), (u,v)-P(u,v) )\ge 0 &\Leftrightarrow \fraka(u, \hat u) \ge \frakb(v, \hat v) \\
                                           & \Leftrightarrow \fraka(u,\tilde u) +\frakb(v,\tilde v) \le \fraka(u)+\frakb(v)
    \end{align*}
    and
    \begin{align*}
        \frakc( (Pu,v), (u,v)-P(u,v) )\ge 0  &\Leftrightarrow  \fraka(\hat u)+\frakb(\hat v) \le \fraka(u, \hat u) - \frakb(v, \hat v)\\
                                            & \Leftrightarrow \fraka(\tilde u)+\frakb(\tilde u)\le \fraka(u,\tilde u)+\frakb(v,\tilde v).
    \end{align*}
    We are now in a position to once again employ the characterisation of semigroup-invariance of a closed convex set Theorem~\ref{thm:barthelemy} which gives the equivalence of~(i) with each of~(iv)-(vii).
\end{proof}

Note that the results in \cite{LeScWi20} are actually considered in the general setting of ordered Hilbert spaces with self-polar cones as well. However, their definition of domination of semigroups requires the existence of \emph{absolute pairings} (see, for instance, \cite[Definition~1.24]{LeScWi20}) which cannot be guaranteed in the non-commutative setting. In particular, the results of \cite{LeScWi20} are inapplicable in our setting.
In fact, the implication \mbox{``(iii) $\Rightarrow$ (i)''} in \cite[Theorem~3.5]{LeScWi20} is only true for \emph{isotone projection cones}, that is, in the Hilbert lattice setting; see \cite[Theorem~1.15]{LeScWi20}.

\section{Domination of positive semigroups}
    \label{sec:domination-both-positive}

Let $(\frakM, \calH, \calH_+,J)$ be a standard form of the von Neumann algebra $\frakM$ and let $(T_t)_{t\ge 0}$ and $(S_t)_{t\ge 0}$ be self-adjoint $C_0$-semigroups on the Hilbert space $\calH$ generated by densely defined, closed, symmetric, and accretive sesquilinear forms $\fraka: \dom{\fraka} \times \dom{\fraka} \to \bbC$ and $\frakb: \dom{\frakb} \times \dom{\frakb} \to \bbC$ respectively. 

In this section, we show that the situation of Theorem~\ref{thm:general} becomes simpler if both the semigroups $(T_t)_{t\ge 0}$ and $(S_t)_{t\ge 0}$ are positive. 
 In particular, our characterisation is the same as in the commutative case \cite[Theorem~2.24 and Proposition~2.23]{Ou04}.

It is easy to see that due to the positivity of both semigroups, the definition -- stated in Section~\ref{sec:domination-general} -- of domination of $(T_t)_{t\ge 0}$ by $(S_t)_{t\ge 0}$ simplifies to 
\[
    T_t u\le S_t u \quad (u \in \calH_+, t\ge 0).
\]

\begin{theorem}
    \label{thm:both-positive}
    Let $(\frakM, \calH, \calH_+,J)$ be a standard form of the von Neumann algebra $\frakM$ and let $(T_t)_{t\ge 0}$ and $(S_t)_{t\ge 0}$ be real and self-adjoint $C_0$-semigroups on the Hilbert space $\calH$ generated by densely defined, closed, symmetric, and accretive sesquilinear forms $\fraka: \dom{\fraka} \times \dom{\fraka} \to \bbC$ and $\frakb: \dom{\frakb} \times \dom{\frakb} \to \bbC$ respectively. 

    If $(T_t)_{t\ge 0}$ and $(S_t)_{t\ge 0}$ are both positive, then the following conditions are equivalent.
    \begin{enumerate}[\upshape (i)]
        \item The semigroup $(T_t)_{t\ge 0}$ is dominated by $(S_t)_{t\ge 0}$, that is, $T_t u\le S_t u$ for all $u\in \calH_+$ and all $t\ge 0$.

        \item The inequality $\fraka\left(\frac{u + (u \wedge v)}{2}\right) + \frakb\left(\frac{v + (u \vee v)}{2}\right) \le \fraka(u)+\frakb(v)$ is true for all $u,v\in \calH_+$.

        \item Each of the following conditions is satisfied:
            \begin{enumerate}[\upshape (a)]
                \item The inclusion $\dom{\fraka}\subseteq \dom{\frakb}$ holds.

                \item If $u\in \dom{\fraka}$ and $v\in \dom{\frakb}$ such that $0\le v\le u$, then $v \in \dom{\fraka}$.

                \item For each $0\le u,v\in \dom{\fraka}$, we have $\fraka(u,v)\ge \frakb(u,v)$.
            \end{enumerate}
    \end{enumerate}
\end{theorem}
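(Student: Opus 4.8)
The plan is to run the same convex‑set machinery as in Theorem~\ref{thm:general}, but with the one‑sided order cone, and to exploit that both semigroups are positive. On $\calH\times\calH$ put $R_t=\bigl(\begin{smallmatrix}T_t&0\\0&S_t\end{smallmatrix}\bigr)$ and consider $C:=\{(a,b):0\le a\le b\}$ together with the larger set $C_0:=\calH_+\times\calH_+$. First I would record that ``(i)'' is equivalent to $R_tC\subseteq C$ for all $t$: if $0\le a\le b$ then $T_ta\ge0$ by positivity of $T_t$, while $T_ta\le S_ta\le S_tb$ by ``(i)'' applied to $a\in\calH_+$ and positivity of $S_t$; conversely, testing on the diagonal $(u,u)$ with $u\in\calH_+$ returns $T_tu\le S_tu$. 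Since both semigroups are positive, $C_0$ is $R_t$-invariant and $C\subseteq C_0$. The structural observation is that the domination of Theorem~\ref{thm:general} (whose hypotheses hold here, as $S$ is positive) coincides with ``(i)'', and that on $C_0$ one has $(u+v)_+=u+v$, so the projection vectors $(\tilde u,\tilde v)$ of Theorem~\ref{thm:general} equal $\bigl(\tfrac12(u+u\wedge v),\tfrac12(v+u\vee v)\bigr)$. Hence ``(i) $\Rightarrow$ (ii)'' follows by invoking Theorem~\ref{thm:general}(iii) and restricting that inequality to the cone $C_0$.

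For ``(ii) $\Rightarrow$ (iii)'' I would feed well-chosen positive pairs into the scalar inequality ``(ii)''; writing $w:=\tfrac12(u-v)_+$ it reads $\fraka(u-w)+\frakb(v+w)\le\fraka(u)+\frakb(v)$. Condition ``(a)'' comes from $v=0$ (then $w=\tfrac12u$, and finiteness of the left-hand side forces $u\in\dom{\frakb}$, whence $\dom{\fraka}\subseteq\dom{\frakb}$ after using that $\dom{\fraka}^J$ is generating and closed under taking positive parts). Condition ``(b)'' comes from $u:=u''$, $v:=u''-v''$ for $0\le v''\le u''$ with $u''\in\dom{\fraka}$, $v''\in\dom{\frakb}$: here $(u-v)_+=v''$ exactly, so ``(ii)'' gives $u''-\tfrac12v''\in\dom{\fraka}$ and hence $v''\in\dom{\fraka}$. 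Condition ``(c)'' comes from $u:=p+tq$, $v:=p$ with $0\le p,q\in\dom{\fraka}$ and $t>0$: then $w=\tfrac t2q$, and after expanding, dividing by $t$ and letting $t\to0^+$ one is left with $\frakb(p,q)\le\fraka(p,q)$. These three substitutions are largely routine, the point being that each chosen pair makes $(u-v)_+$ explicit, so no closure property has to be assumed in advance.

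For ``(iii) $\Rightarrow$ (i)'' the plan is a resolvent comparison. Positivity of $T$ and $S$ makes $\dom{\fraka},\dom{\frakb}$ sublattices and yields the Beurling--Deny inequalities $\fraka(x_+,x_-)\le0$, $\frakb(x_+,x_-)\le0$. Fix $u\in\calH_+$, $\lambda>0$, and set $w_1:=(\lambda+A)^{-1}u\ge0$, $w_2:=(\lambda+B)^{-1}u\ge0$. Testing the two form identities against $\phi:=(w_1-w_2)_+$ and subtracting, one gets $\lambda\norm{(w_1-w_2)_+}^2=\frakb(w_2-w_1,\phi)+\bigl(\frakb(w_1,\phi)-\fraka(w_1,\phi)\bigr)$; the second bracket is $\le0$ by ``(c)'', and $\frakb(w_2-w_1,\phi)=-\frakb(\phi)+\frakb((w_1-w_2)_-,(w_1-w_2)_+)\le0$ by accretivity and the Beurling--Deny inequality for $\frakb$. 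Thus $(w_1-w_2)_+=0$, i.e.\ $w_1\le w_2$, and passing $\lambda\to\infty$ through the exponential formula gives $T_tu\le S_tu$. The admissibility of the test function---that $\phi=(w_1-w_2)_+$ lies in $\dom{\fraka}$---is the crux, and this is exactly where ``(a)'' and ``(b)'' re-enter: they encode that $\dom{\fraka}^J$ is a generalised ideal of $\dom{\frakb}^J$, and since $w_1,w_2\ge0$ the term $(w_1+w_2)_-$ vanishes, so the generalised-ideal relation $(w_1-w_2)_+-(w_1+w_2)_-\in\dom{\fraka}$ collapses to precisely $(w_1-w_2)_+\in\dom{\fraka}$.

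The main obstacle is this interplay between the order cone and the form domains, and it is genuinely non-commutative. In the commutative case $u\wedge v\ge0$ whenever $u,v\ge0$, so $(u-v)_+\le u$ and the projection onto $\{0\le a\le b\}$ is the elementary clamp; in a standard form $u\wedge v$ need not be positive, $(u-v)_+\le u$ can fail, and the true projection onto $\{0\le a\le b\}$ has no closed form on the incomparable region. My proof therefore never projects onto that set directly: the forward direction borrows the \emph{two}-sided projection of Theorem~\ref{thm:general} restricted to $C_0$, and the backward direction replaces the projection by the resolvent comparison above. I expect the delicate part to be verifying that ``(a)''--``(b)'' really do furnish the order-closure $(w_1-w_2)_+\in\dom{\fraka}$ for positive arguments; this is the non-commutative substitute for the commutative fact $(u-v)_+\le u$, and it is where positivity of \emph{both} semigroups---through the vanishing of $(u+v)_-$ on the cone and the Beurling--Deny inequalities---is indispensable.
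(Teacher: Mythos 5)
Your cycle (i) $\Rightarrow$ (ii) $\Rightarrow$ (iii) $\Rightarrow$ (i) is organised differently from the paper's proof, and the first two implications are correct. For (i) $\Rightarrow$ (ii) you restrict the inequality of Theorem~\ref{thm:general}(iii) to the cone and use that $(\tilde u,\tilde v)=\bigl(\tfrac12(u+u\wedge v),\tfrac12(v+u\vee v)\bigr)$ when $u,v\ge 0$; this cleanly sidesteps the orthogonal projection onto $\{(a,b):0\le a\le b\}$, which the paper instead computes explicitly at this point. Your (ii) $\Rightarrow$ (iii) also deviates from the paper, which obtains (a) and (b) from invariance of $\dom{\fraka}\times\dom{\frakb}$ under that projection and gets (c) from the approximating forms $\fraka^t(\argument,\argument)=\tfrac1t\duality{(I-T_t)\argument}{\argument}$; you extract (a), (b), (c) directly from the quadratic inequality via the substitutions $v=0$, $(u,v)=(u'',u''-v'')$ and $(u,v)=(p+tq,p)$, and all three computations check out (with (a) proved first, as you do, so that the right-hand sides are finite).

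The implication (iii) $\Rightarrow$ (i) has a genuine gap, and it sits exactly where you predicted. The resolvent comparison is algebraically sound: with $\phi=(w_1-w_2)_+$ one gets $\lambda\norm{\phi}^2=\bigl(\frakb(w_1,\phi)-\fraka(w_1,\phi)\bigr)+\frakb(w_2-w_1,\phi)\le 0$ using (c) and the non-commutative Beurling--Deny inequality for $\frakb$, whence $w_1\le w_2$. But this requires $\phi\in\dom{\fraka}$, and your justification --- that (a) and (b) ``encode'' the generalised-ideal property, which for positive $w_1,w_2$ collapses to $\phi\in\dom{\fraka}$ --- is an unsupported assertion, not a proof: the generalised-ideal property is a \emph{consequence} of the theorem (via Theorem~\ref{thm:general}(iv)), so invoking it inside the proof of (iii) $\Rightarrow$ (i) is circular. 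What (b) actually yields is membership in $\dom{\fraka}$ for positive elements dominated by an element of $\dom{\fraka}$, and the natural dominating candidate fails: $(w_1-w_2)_+\le w_1$ is equivalent to $w_1\wedge w_2\ge 0$, which is false in general standard forms. For instance, in the standard form of $M_2(\bbC)$ (Hilbert--Schmidt inner product, cone of positive semidefinite matrices, so that $x_+$ is the spectral positive part), take
\[
    w_1=\begin{pmatrix}1&0\\0&0\end{pmatrix},\qquad w_2=\frac12\begin{pmatrix}1&1\\1&1\end{pmatrix}.
\]
Then $w_1-w_2$ has eigenvalues $\pm\tfrac{1}{\sqrt2}$, and $(w_1-w_2)_+$ equals $\tfrac{1}{\sqrt2}$ times the rank-one projection onto the span of the vector $(1,\,1-\sqrt2)$, whose $(2,2)$ entry is strictly positive; since the $(2,2)$ entry of $w_1$ is $0$, the matrix $w_1\wedge w_2=w_1-(w_1-w_2)_+$ has $(2,2)$ entry $-(\sqrt2-1)/4<0$. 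So $w_1\wedge w_2\notin\calH_+$, condition (b) cannot be applied to $\phi$, and your argument does not establish $\phi\in\dom{\fraka}$.

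For context when you compare with the paper: its own proof of (iii) $\Rightarrow$ (i) disposes of this very step by asserting ``$0\le u\wedge v\le u$'' for $u,v\ge 0$ and then citing (b), and the example above shows that this assertion fails in the non-commutative setting (the same example shows that the first component of the projection formula~\eqref{eq:projection-positive} need not lie in $\calH_+$). So you have correctly isolated the crux of the theorem --- producing $(w_1-w_2)_+\in\dom{\fraka}$, equivalently $w_1\wedge w_2\in\dom{\fraka}$, from (a), (b), (c) alone --- but flagging the difficulty is not the same as resolving it: as written, neither your resolvent route nor the projection route closes this step, and your proposal is incomplete until that membership is proved by an independent argument.
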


\begin{proof}
    First of all, consider the closed convex set
    \[
        C:= \{ (a,b)\in \calH \times \calH: 0\le a\le b\}
    \]
    and the product semigroup on $\calH\times \calH$ given by
    \[
        R_t:= \begin{pmatrix}
                    T_t & 0     \\
                    0    & S_t  
              \end{pmatrix} \qquad (t\ge 0).
    \]
    The semigroup $(R_t)_{t\ge 0}$ is  generated by the form $\frakc$ with $\dom{\frakc}=\dom{\fraka}\times \dom{\frakb}$ and
    \[
        \frakc( (u_0,v_0), (u_1, v_1) ) = \fraka(u_0, u_1)+ \frakb(v_0,v_1).
    \]
    Because the semigroups are positive, the domination of $(T_t)_{t\ge 0}$ by $(S_t)_{t\ge 0}$ is equivalent to  the closed convex set $C$ being invariant under the semigroup $(R_t)_{t\ge 0}$.

    As in the proof of Theorem~\ref{thm:general}, we compute the orthogonal projection $P$ of $\calH\times \calH$ onto $C$. We claim
    \begin{equation}
        \label{eq:projection-positive}
        P(u,v) = \frac12\big(u + (u \wedge v), v+ (u \vee v)\big) \quad \text{whenever }u,v\ge 0.
    \end{equation}
    To prove the claim, let $u,v\in \calH_+$. We of course have  $0\le u + (u \wedge v)\le  v+ (u \vee v)$. Moreover, for each $(a,b)\in C$, we have 
    \begin{align*}
       A&:= \duality{(u,v)-\frac12\left(u + (u \wedge v), v+ (u \vee v)\right)}{(a,b)-\frac12\left(u + (u \wedge v), v+ (u \vee v)\right)} \\
        &=  \duality{\frac{u-(u\wedge v)}{2}}{a-\frac{u + (u \wedge v)}{2}}- \duality{\frac{(u\vee v)-v}{2}}{b-\frac{v + (u \vee v)}{2}}\\
       &=  \duality{\frac{u-(u\wedge v)}{2}}{a-\frac{u + (u \wedge v)}{2}}- \duality{\frac{u-(u\wedge v)}{2}}{b-\frac{v + (u \vee v)}{2}}.
    \end{align*}
    On further simplification, we get
    \begin{align*}
        A &=  \duality{\frac{u-(u\wedge v)}{2}}{a-b+\frac{ (u \vee v) - u + v - (u \wedge v)  }{2}}\\
          &=  \duality{\frac{u-(u\wedge v)}{2}}{a-b+(u \vee v) - u}\\
          &=  \frac{1}{2}\duality{(v-u)_-}{a-b}+\frac12 \duality{(v-u)_-}{(v-u)_+}\\
          &=  \frac{1}{2}\duality{(v-u)_-}{a-b}
    \end{align*}
    where the last equality is obtained using the orthogonality of the positive and negative parts.
    As $(a,b)\in C$, it follows that $A\le 0$. As a consequence, the expression~\eqref{eq:projection-positive} follows due to the criterion~\eqref{eq:projection-criterion}; here we have implicitly used that the cone is self-polar.

   ``(i) $\Leftrightarrow$ (ii)'': As the forms are symmetric, so taking $C_0=\calH_+\times \calH_+$ in Theorem~\ref{thm:barthelemy}, we obtain $(R_t)_{t\ge 0}$ leaves $C$ invariant if and only if $\frakc(P(u,v))\le \frakc( (u,v))$ for all $u,v\in \calH_+$. The equivalence can now be obtained at once from~\eqref{eq:projection-positive}.

    ``(i) $\Rightarrow$ (iii)'': Suppose that $(S_t)_{t\ge 0}$ dominates $(T_t)_{t\ge 0}$.
     This yields, as noted above, the invariance of the closed convex set $C$ under the semigroup $(R_t)_{t\ge 0}$. In particular, $P(\dom{\fraka}\times \dom{\frakb})\subseteq \dom{\fraka}\times \dom{\frakb}$ by Theorem~\ref{thm:barthelemy}. So if $u,v$ are given as in~(b), then using~\eqref{eq:projection-positive}, we get
    \[
        \frac12(u+v,u+v)= P(u,v) \in \dom{\fraka}\times \dom{\frakb}.
    \]
    In particular, $v\in \dom{\fraka}$, as $\dom{\fraka}$ is a subspace. Moreover, using~\eqref{eq:projection-positive}, we get
    \[
        \frac12(u,u)=P(u,0)\in \dom{\fraka}\times \dom{\frakb},
    \]
    for all $0\le u \in \dom{\fraka}$.
    Whence, $\dom{\fraka} \cap \calH_+ \subseteq \dom{\frakb}$.
    Now, let $u\in \dom{\fraka}$. Since $(T_t)_{t\ge 0}$ is positive, the Beurling-Deny criterion \cite[Theorem~2.53]{Ci08} gives  that $ u_+, u_-\in \dom{\fraka}\cap \calH_+ \subseteq \dom{\frakb}$, and consequently $u=u_+-u_-\in \dom{\frakb}$. In fact, positivity of $(T_t)_{t\geq 0}$ even implies that it is real and so, $\dom{\fraka}^J$ is generating for $\dom{\fraka}$.
    We infer that $\dom{\fraka}\subseteq \dom{\frakb}$.
    
    We are left to prove~(c).
    For this purpose, we define the bounded, symmetric, and accretive sesquilinear forms $\fraka^t$  as 
    \[
        \fraka^t(\argument,\argument):= \frac{1}{t}\duality{(I-T_t)\argument}{\argument}
    \]
    and analogously the forms $\frakb^t$  for $t>0$. These forms satisfy 
    \[
        \fraka^t(u,v)-\frakb^t(u,v)= \frac{1}{t} \duality{(S_t-T_t)u}{v} \ge 0
    \]
    for each $0\le u,v\in \dom{\fraka}\subseteq \dom{\frakb}$ and all $t>0$. Letting $t\downarrow 0$ in the above inequality yields~(c); here we have used that $\lim_{t\to 0}\fraka^t(\argument, \argument)= \fraka(\argument,\argument)$ and similarly for $\frakb^t$ (see, for instance, \cite[Lemma~1.56]{Ou04}).

    ``(iii) $\Rightarrow$ (i)'': Let  $(u,v)\in \dom{\fraka}\times\dom{\frakb}$ with $u,v\ge 0$.
    By~(a), we have $u,v\in \dom{\frakb}$.  This allows us to infer from the positivity of the semigroup $(S_t)_{t\ge 0}$ that $u \vee v, u \wedge v \in \dom{\frakb}$ (see \cite[Proposition~4.5 and  Theorem~4.7]{Ci97}).
    On the other hand, $0 \le u \wedge v \le u$, so we can employ~(b) to get $u\wedge v\in \dom{\fraka}$. Wherefore, the equality~\eqref{eq:projection-positive} yields $P(u,v)\in \dom{\fraka}\times \dom{\frakb}$.

    A direct application of~\eqref{eq:projection-positive} and~(c) gives that for every $(u,v)\in \dom{\fraka}\times \dom{\frakb}$ with $u,v\ge 0$,
    \begin{align*}
        \frakc( (u,v), (u,v)-P(u,v) ) &= \fraka \left(u, \frac{u-(u\wedge v)}{2}\right)+\frakb\left(v,\frac{v-(u\vee v)}{2}\right)\\
                                      &\ge \frakb \left(u, \frac{u-(u\wedge v)}{2}\right)+\frakb\left(v,\frac{v-(u\vee v)}{2}\right)\\
                                      &=  \frakd( (u,v), (u,v)-P(u,v) );
    \end{align*}
    where $\frakd$ is the form with domain $\dom{\frakb}\times \dom{\frakb}$ and
    \[
        \frakd( (u_0,v_0), (u_1, v_1) ) := \frakb(u_0, u_1)+ \frakb(v_0,v_1).
    \]
    Of course, the semigroup generated by $\frakd$ on $\calH\times \calH$ is given by
    \[
        U_t:= \begin{pmatrix}
                    S_t & 0     \\
                    0   & S_t  
              \end{pmatrix} \qquad (t\ge 0).
    \]
    Positivity of $(S _t)_{t\ge 0}$ implies that $U_tC\subseteq C$ for $t\ge 0$. Therefore, the characterisation of invariance of closed convex sets Theorem~\ref{thm:barthelemy} yields
    \[
        \frakd( (u,v), (u,v)-P(u,v) )\ge 0.
    \]
    Substituting above, we get $\frakc( (u,v), (u,v)-P(u,v) )\ge 0$. 
    
    Finally, let $C_0$ be the closed convex set $\calH_+\times \calH_+$. We have prove that the inclusion $P(\dom{\frakc}\cap C_0)\subseteq \dom{\frakc}\cap C_0$ and the inequality $\re \frakc( (u,v), (u,v)-P(u,v) )\ge 0$ are satisfied for all $(u,v)\in\dom{\frakc} \cap C_0$. Another application of Theorem~\ref{thm:barthelemy} gives that $R_tC\subseteq C$ for all $t\ge 0$, which proves~(i).
\end{proof}

\begin{example}
    Let $(\frakM, \calH, \calH_+,J)$ be a standard form of the von Neumann algebra $\frakM$. Given a self-adjoint operator $(\calb, \dom{\calb})$ on $\calH$ affiliated to $\frakM$, we let $d_{\calb}$ be the unbounded derivation 
    \[ \dom{d_{\calb}  } : = \dom{\calb} \cap  J \dom{\calb},\qquad d_{\calb} : = i [ a - JaJ];  \]
     see \cite[Section~5]{Ci97} for details. It was shown in \cite[Proposition~5.4]{Ci97}  that the form
    \[
        \dom{\frakb}=\dom{d_{\calb}}, \qquad \frakb(u):= \norm{d_{\calb}u}^2
    \]
    is closable and its closure $\overline{\frakb}$ generates a positive semigroup $(S_t)_{t\ge 0}$. Therefore, if $M:\calH\to \calH$ is a positive bounded operator, then the closure of the form
    \[
        \dom{\fraka}:=\dom{\frakb}, \qquad \fraka(u):= \frakb(u) + \duality{Mu}{u}
    \]
    also generates a positive semigroup, say $(T_t)_{t\ge 0}$. Clearly, Theorem~\ref{thm:both-positive} implies that the semigroup $(S_t)_{t\ge 0}$ dominates $(T_t)_{t\ge 0}$.
\end{example}

\section{Domination of a semigroup by a positive semigroup}
    \label{sec:domination-not-real}    

Every element $u$ of a Hilbert lattice $\calH$ satisfies
\[
    \modulus{u}= \sup\{\re(e^{i\theta}u): \theta \in [0,2\pi]\}.
\]
This allows us to generalise the definition of domination of semigroups to the case when the dominated semigroup is not necessarily real and, in turn, generalise Theorem~\ref{thm:general}.

Let $(\frakM, \calH, \calH_+,J)$ be a standard form of the von Neumann algebra $\frakM$ and let $(T_t)_{t\ge 0}$ and $(S_t)_{t\ge 0}$ be (not necessarily real) self-adjoint $C_0$-semigroups on the Hilbert space $\calH$ generated by closed quadratic forms $\fraka: \calH \to [0,\infty]$ and $\frakb: \calH\to [0,\infty]$ respectively. 

We say that the semigroup $(T_t)_{t\ge 0}$ is \emph{dominated} by $(S_t)_{t\ge 0}$ if
\[
    \left( \re(e^{i\theta} u) \le v \text{ for all }\theta \in [0,2\pi] \right) \Rightarrow \left(\re(e^{i\theta}T_t u)\le S_t v \text{ for all }\theta \in [0,2\pi]\right)
\]
 for all $u,v \in \calH$ and all $t\ge 0$. In this case, of course, $(S_t)_{t\ge 0}$ is automatically positive and, in particular, real.
 In particular, the above domination is equivalent to the following condition:~the semigroup $(R_t)_{t\ge 0}$ given by
 \[
        R_t:= \begin{pmatrix}
                    T_t & 0     \\
                    0    & S_t  
              \end{pmatrix} \qquad (t\ge 0)
\]
leaves the closed convex sets
\[
    C_{\theta}:= \{ (a,b) \in \calH \times \calH : \re(e^{i \theta} a)\le b\} 
\]
invariant for each $\theta \in [0,2\pi]$. As in the preceding sections, one can show that the orthogonal projections $P_{\theta}$ onto the sets $C_{\theta}$ satisfy
\[
    P_{\theta} (u,v) = \left(u-\frac{1}{2} \left(\re(e^{i\theta} u)-v\right)^+, v+\frac{1}{2} \left(\re(e^{i\theta} u)-v\right)^+\right)
\]
for each $(u,v)\in \calH \times \calH^J$ and all $\theta \in [0,2\pi]$.
Now, we may proceed exactly as in Section~\ref{sec:domination-general} (employing Theorem~\ref{thm:barthelemy}) in order to obtain the following characterisation:

\begin{theorem}
    Let $(\frakM, \calH, \calH_+,J)$ be a standard form of the von Neumann algebra $\frakM$ and let $(T_t)_{t\ge 0}$ and $(S_t)_{t\ge 0}$ be self-adjoint $C_0$-semigroups on the Hilbert space $\calH$ generated by closed quadratic forms $\fraka: \calH \to [0,\infty]$ and $\frakb: \calH\to [0,\infty]$ respectively. 

    If the semigroup $(S_t)_{t\ge 0}$ is real, then the following are equivalent.
    \begin{enumerate}[\upshape(i)]
        \item The semigroup $(T_t)_{t\ge 0}$ is dominated by $(S_t)_{t\ge 0}$, that is,
        \[
            \left( \re(e^{i\theta} u) \le v \text{ for all }\theta \in [0,2\pi] \right) \Rightarrow \left(\re(e^{i\theta}T_t u)\le S_t v \text{ for all }\theta \in [0,2\pi]\right)
        \]
        for all $u,v \in \calH$ and all $t\ge 0$. 

        \item The inequality \[\fraka\left( u - \frac{1}{2}\left(\re(e^{i\theta} u-v\right)^+\right) +\frakb\left(v+ \frac{1}{2}\left(\re(e^{i\theta} u-v\right)^+\right) \le \fraka(u)+\frakb(v)\] holds for all $(u,v)\in \calH\times \calH^J$ and for all $\theta \in [0,2\pi]$.
    \end{enumerate}
\end{theorem}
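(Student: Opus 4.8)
The plan is to run the argument of Theorem~\ref{thm:general} almost verbatim, the only real work being the identification and computation of the correct orthogonal projection. Put $R_t=\operatorname{diag}(T_t,S_t)$ on $\calH\times\calH$, generated by the diagonal form $\frakc$ with $\dom{\frakc}=\dom{\fraka}\times\dom{\frakb}$ and $\frakc((u_0,v_0),(u_1,v_1))=\fraka(u_0,u_1)+\frakb(v_0,v_1)$. Since the hypothesis of domination is the \emph{simultaneous} requirement $\re(e^{i\theta}u)\le v$ for every $\theta$, the relevant invariant set is the intersection
\[
    D:=\bigcap_{\theta\in[0,2\pi]}C_\theta=\{(a,b)\in\calH\times\calH^J:\re(e^{i\theta}a)\le b\ \text{for all }\theta\},
\]
the non-commutative analogue of $\{(a,b):\modulus{a}\le b\}$, and domination is precisely $R_tD\subseteq D$ for all $t\ge 0$. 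As $S_t$ is real and $\frakc$ is symmetric, the equivalence (i)$\Leftrightarrow$(ii) of Theorem~\ref{thm:barthelemy} turns this into the single inequality $\frakc(P(u,v))\le\frakc(u,v)$, where $P$ is the orthogonal projection onto $D$, and condition~(ii) is a rewriting of this inequality.

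Everything therefore hinges on computing $P$, and this is the step I expect to be hardest. I would first dispose of a single $C_\theta=\{(a,b):\re(e^{i\theta}a)\le b\}$: writing $e:=\re(e^{i\theta}u)-v$ and $e_+$ for its positive part (the projection of the excess onto the self-polar cone $\calH_+$), the variational criterion~\eqref{eq:projection-criterion} together with the orthogonality $\duality{e_+}{e_-}=0$ and self-polarity gives the projection onto $C_\theta$ as $\big(u-\tfrac12 e^{-i\theta}e_+,\,v+\tfrac12 e_+\big)$; indeed one checks $v+\tfrac12 e_+-\re(e^{i\theta}(u-\tfrac12 e^{-i\theta}e_+))=e_-\ge 0$ and that the inner product in~\eqref{eq:projection-criterion} collapses to $-\duality{e_+}{b-\re(e^{i\theta}a)}\le 0$. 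I would be careful with the factor $e^{-i\theta}$, the minimal-norm direction along which $\re(e^{i\theta}a)$ decreases, since it is easy to omit. The genuine difficulty is passing from these half-space-type sets to their intersection $D$: in the commutative case one has the closed-form ``ice-cream cone'' projection coming from $\modulus{a}=\sup_\theta\re(e^{i\theta}a)$, and in fact $P$ then coincides with the single $P_{\theta^\ast}$ for the \emph{maximising} angle $\theta^\ast$; but in the non-commutative setting the modulus loses the triangle inequality (as stressed in the introduction), so this shortcut is unavailable and one must verify directly that the candidate for $P(u,v)$ lies in $D$, that is, that $b\ge\re(e^{i\theta}a)$ holds for \emph{all} $\theta$ at once, and that it satisfies~\eqref{eq:projection-criterion} against every element of $D$, using only self-polarity.

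One cautionary point belongs in the write-up: the invariance of the intersection $D$ must not be replaced by the invariance of the individual half-spaces $C_\theta$, since the latter is strictly stronger --- testing the $\theta=0$ set on the diagonal pairs $(w,w)$ with $w\in\calH^J$ forces $\re(T_tw)=S_tw$, i.e.\ equality rather than mere domination. Consequently the inequality in~(ii) has to be the one driven by the projection onto $D$ (equivalently, by the excess-maximising angle) and not an inequality imposed separately for each $\theta$. Once $P$ is correctly in hand, the equivalence (i)$\Leftrightarrow$(ii) drops out exactly as in the proof of Theorem~\ref{thm:general}; and if one prefers to test the form inequality only for $b\ge 0$, the invariant superset $C_0=\calH\times\calH_+$ together with the extended equivalence~(v)--(vi) of Theorem~\ref{thm:barthelemy} gives the cleanest formulation.
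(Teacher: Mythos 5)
Your structural diagnosis is correct, and it in fact exposes a defect in the paper's own treatment rather than merely diverging from it. As written, condition~(i) is precisely invariance of the intersection $D:=\bigcap_{\theta}C_\theta$ under $(R_t)_{t\ge 0}$, whereas the paper asserts that this is equivalent to invariance of each individual $C_\theta$, and its condition~(ii) is the resulting $\theta$-wise family of form inequalities. Your diagonal argument refutes that asserted equivalence: for $w\in\calH^J$ both $(w,w)$ and $(-w,-w)$ lie in $C_0$, so invariance of $C_0$ alone forces $\re(T_tw)=S_tw$. The failure can be made completely concrete on the standard form of $\frakM=\bbC$ (so $\calH=\bbC$, $\calH_+=[0,\infty)$, $J$ complex conjugation): take $\fraka(u)=\modulus{u}^2$ and $\frakb\equiv 0$, so that $T_t=e^{-t}I$ and $S_t=I$. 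Then (i) holds, but (ii) fails at $\theta=0$ and $(u,v)=(0,-1)$, where the left-hand side equals $\fraka(-\tfrac12)+\frakb(-\tfrac12)=\tfrac14$ while the right-hand side equals $0$. So the stated implication \mbox{(i) $\Rightarrow$ (ii)} is false, and your insistence that only the intersection $D$ encodes domination is the right instinct. Your projection formula for a single $C_\theta$, including the phase $e^{-i\theta}$, is also correct; the paper's displayed $P_\theta$ omits this factor, and its right-hand side need not even lie in $C_\theta$ once $\cos\theta\neq 1$.

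As a proof, however, your proposal does not close, for two reasons. First, it is internally inconsistent about condition~(ii): at the outset you claim (ii) ``is a rewriting'' of $\frakc(P(u,v))\le\frakc((u,v))$ with $P$ the projection onto $D$, but the stated (ii) quantifies over $\theta$ separately and is built from the $C_\theta$-projections --- exactly what your own cautionary paragraph correctly rules out. You cannot keep both; by the scalar example the stated (ii) is strictly stronger than (i), so no argument can establish the claimed equivalence, and what your route could at best deliver is a \emph{corrected} theorem in which (ii) is replaced by the single inequality driven by the $D$-projection. Second, even that corrected statement is not proved here, because its entire content is the computation of $P$, which you explicitly defer: you list the verification steps but produce no candidate formula. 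This is not a routine step. In the commutative case the projection onto $\{(a,b):\modulus{a}\le b\}$ involves the pointwise phase $u/\modulus{u}$ and a third regime (projection to the apex where $v\le-\modulus{u}$, which is also where your claim $P=P_{\theta^\ast}$ breaks down, apart from the fact that the maximising angle is pointwise rather than global). In the non-commutative setting the paper supplies no modulus or polar decomposition for non-real elements of $\calH$, so there is no evident candidate to verify. Until $P$ is computed, neither the theorem as stated (which is false) nor its corrected version follows from your argument.
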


\subsection*{Acknowledgements} 

The first author is grateful to Melchior Wirth for various fruitful discussions about standard forms and for pointing out the reference \cite{Co74}.

\bibliographystyle{plainurl}
\bibliography{ralph}

\def\cprime{$'$} \def\cprime{$'$} \def\cprime{$'$} \def\cprime{$'$}
  \def\cprime{$'$} \def\cprime{$'$} \def\cprime{$'$} \def\cprime{$'$}
  \def\cprime{$'$} \def\cprime{$'$} \def\cprime{$'$} \def\cprime{$'$}
  \def\cprime{$'$} \def\cprime{$'$} \def\cprime{$'$} \def\cprime{$'$}
  \def\cprime{$'$} \def\cprime{$'$} \def\cprime{$'$} \def\cprime{$'$}
  \def\cprime{$'$} \def\cprime{$'$} \def\cprime{$'$} \def\cprime{$'$}
  \def\cprime{$'$} \def\cprime{$'$} \def\cprime{$'$} \def\cprime{$'$}
  \def\cprime{$'$} \def\cprime{$'$} \def\cprime{$'$}
  \def\ocirc#1{\ifmmode\setbox0=\hbox{$#1$}\dimen0=\ht0 \advance\dimen0
  by1pt\rlap{\hbox to\wd0{\hss\raise\dimen0
  \hbox{\hskip.2em$\scriptscriptstyle\circ$}\hss}}#1\else {\accent"17 #1}\fi}
  \def\cprime{$'$} \def\cprime{$'$} \def\cprime{$'$}
\begin{thebibliography}{10}

\bibitem{Na86}
W.~Arendt, A.~Grabosch, G.~Greiner, U.~Groh, H.~P. Lotz, U.~Moustakas,
  R.~Nagel, F.~Neubrander, and U.~Schlotterbeck.
\newblock {\em One-parameter semigroups of positive operators}, volume 1184 of
  {\em Lecture Notes in Mathematics}.
\newblock Springer-Verlag, Berlin, 1986.
\newblock \href {https://doi.org/10.1007/BFb0074922}
  {\path{doi:10.1007/BFb0074922}}.

\bibitem{By96}
L.~Barth\'elemy.
\newblock Invariance d'un convex ferm\'e par un semi-groupe associ\'e \`a une
  forme non-lin\'eaire.
\newblock {\em Abst. Appl. Anal.}, 1:237--262, 1996.
\newblock \href {https://doi.org/10.1155/S1085337596000127}
  {\path{doi:10.1155/S1085337596000127}}.

\bibitem{Ci97}
F.~Cipriani.
\newblock Dirichlet forms and {M}arkovian semigroups on standard forms of von
  {N}eumann algebras.
\newblock {\em J. Funct. Anal.}, 147(2):259--300, 1997.
\newblock \href {https://doi.org/10.1006/jfan.1996.3063}
  {\path{doi:10.1006/jfan.1996.3063}}.

\bibitem{Ci08}
F.~Cipriani.
\newblock Dirichlet forms on noncommutative spaces.
\newblock In {\em Quantum potential theory. Lectures given at the school
  `Quantum potential theory: Structure and applications to physics',
  Greifswald, Germany, February 26 to March 9, 2007.}, pages 161--276. Berlin:
  Springer, 2008.
\newblock \href {https://doi.org/10.1007/978-3-540-69365-9_5}
  {\path{doi:10.1007/978-3-540-69365-9_5}}.

\bibitem{Co74}
Alain Connes.
\newblock Caract{\'e}risation des espaces vectoriels ordonn{\'e}s sous jacents
  aux alg{\`e}bres de {Von} {Neumann}.
\newblock {\em Ann. Inst. Fourier}, 24(4):121--155, 1974.
\newblock \href {https://doi.org/10.5802/aif.534} {\path{doi:10.5802/aif.534}}.

\bibitem{HeScUh77}
H.~Hess, R.~Schrader, and D.~A. Uhlenbrock.
\newblock Domination of semigroups and generalization of {Kato}'s inequality.
\newblock {\em Duke Math. J.}, 44:893--904, 1977.
\newblock \href {https://doi.org/10.1215/S0012-7094-77-04443-X}
  {\path{doi:10.1215/S0012-7094-77-04443-X}}.

\bibitem{Ka72}
T.~Kato.
\newblock Schr{\"o}dinger operators with singular potentials.
\newblock {\em Isr. J. Math.}, 13, 1972.
\newblock \href {https://doi.org/10.1007/BF02760233}
  {\path{doi:10.1007/BF02760233}}.

\bibitem{LeScWi20}
D.~Lenz, M.~Schmidt, and M.~Wirth.
\newblock Domination of quadratic forms.
\newblock {\em Math. Z.}, 296(1-2):761--786, 2020.
\newblock \href {https://doi.org/10.1007/s00209-019-02440-4}
  {\path{doi:10.1007/s00209-019-02440-4}}.

\bibitem{MaVoVo05}
A.~Manavi, H.~Vogt, and J.~Voigt.
\newblock Domination of semigroups associated with sectorial forms.
\newblock {\em J. Oper. Theory}, 54(1):9--25, 2005.
\newblock URL: \url{https://www.jstor.org/stable/24715669}.

\bibitem{Ou96}
E.~M. Ouhabaz.
\newblock Invariance of closed convex sets and domination criteria for
  semigroups.
\newblock {\em Potential Anal.}, 5(6):611--625, 1996.
\newblock \href {https://doi.org/10.1007/BF00275797}
  {\path{doi:10.1007/BF00275797}}.

\bibitem{Ou04}
E.~M. Ouhabaz.
\newblock {\em Analysis of Heat Equations on Domains}, volume~30 of {\em London
  Mathematical Society Monographs}.
\newblock Princeton University Press, Princeton, 2004.
\newblock \href {https://doi.org/10.1515/9781400826483}
  {\path{doi:10.1515/9781400826483}}.

\bibitem{Si77}
B.~Simon.
\newblock An abstract {Kato}'s inequality for generators of positivity
  preserving semigroups.
\newblock {\em Indiana Univ. Math. J.}, 26:1067--1073, 1977.
\newblock \href {https://doi.org/10.1512/iumj.1977.26.26086}
  {\path{doi:10.1512/iumj.1977.26.26086}}.

\bibitem{Si79}
B.~Simon.
\newblock Kato's inequality and the comparison of semigroups.
\newblock {\em J. Funct. Anal.}, 32:97--101, 1979.
\newblock \href {https://doi.org/10.1016/0022-1236(79)90079-X}
  {\path{doi:10.1016/0022-1236(79)90079-X}}.

\bibitem{Ta03II}
M.~Takesaki.
\newblock {\em Theory of operator algebras. {II}}, volume 125 of {\em
  Encyclopaedia of Mathematical Sciences}.
\newblock Springer-Verlag, Berlin, 2003.
\newblock Operator Algebras and Non-commutative Geometry, 6.
\newblock \href {https://doi.org/10.1007/978-3-662-10451-4}
  {\path{doi:10.1007/978-3-662-10451-4}}.

\end{thebibliography}

\end{document}